\newtheorem{theorem}{Theorem}
\newtheorem{lemma}[theorem]{Lemma}
\newtheorem*{thm*}{Theorem}
\newtheorem{proposition}[theorem]{Proposition}
\theoremstyle{remark}
\newtheorem*{remark*}{Remark}
\def\<{\left\langle}
\def\>{\right\rangle}
\newcommand{\exterior}{\textstyle{\bigwedge}}
\begin{document}

\title{On the quantum $K$-ring of the flag manifold}

\date{\today}
\author[Anderson]{David Anderson}
\address{Department of Mathematics\\ Ohio State University\\ 100 Math Tower, 231 West 18th Ave. \\ Columbus,  OH 43210\\ USA}
\email{anderson.2804@math.osu.edu}

\author[Chen]{Linda Chen}
\address{Department of Mathematics and Statistics\\ Swarthmore College\\ Swarthmore, PA 19081\\ USA}
\email{lchen@swarthmore.edu}

\author[Tseng]{Hsian-Hua Tseng}
\address{Department of Mathematics\\ Ohio State University\\ 100 Math Tower, 231 West 18th Ave. \\ Columbus,  OH 43210\\ USA}
\email{hhtseng@math.ohio-state.edu}

\thanks{D. A. is supported in part by NSF grant DMS-1502201. L. C. is supported in part by Simons Foundation Collaboration grant 524354. H.-H. T. is supported in part by NSF grant DMS-1506551.}

\begin{abstract}
We establish a finiteness property of the quantum K-ring of the complete flag manifold.
\end{abstract}

\maketitle

The main aim of this note is to prove a fundamental fact about the quantum $K$-ring of the complete flag manifold.

\begin{thm*}
The structure constants for (small) quantum multiplication of Schubert classes in $QK_T(Fl_{r+1})$ are polynomials in the equivariant and Novikov variables.
\end{thm*}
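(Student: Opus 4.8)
The plan is to reduce the assertion to the existence of a suitable finite presentation of $QK_T(Fl_{r+1})$ over the polynomial Novikov ring, and then to deduce polynomiality of the structure constants by a flatness argument. As a module, $QK_T(Fl_{r+1})\cong K_T(Fl_{r+1})\otimes_R R[[q_1,\dots,q_r]]$, where $R=K_T(\mathrm{pt})$ is the ring of equivariant parameters; the small quantum product is $R[[q]]$-bilinear and is obtained from the quantum $K$-theoretic pairing $((\alpha,\beta))=\sum_d q^d\chi_d(\alpha,\beta)$ by inverting the matrix $\bigl(((\sigma_a,\sigma_b))\bigr)_{a,b}$ (which is the classical pairing matrix mod $q$, hence invertible over $R[[q]]$), so a priori the structure constants lie only in the completed Novikov ring $R[[q_1,\dots,q_r]]$. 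It suffices to exhibit a subring $A\subseteq QK_T(Fl_{r+1})$ that is free as an $R[q_1,\dots,q_r]$-module, contains every Schubert class $\sigma_w$, and has $\{\sigma_w\}_{w\in S_{r+1}}$ as an $R[q]$-basis: then $\sigma_u\star\sigma_v\in A$, and expanding in this basis (which is also an $R[[q]]$-basis of $QK_T(Fl_{r+1})$) exhibits the structure constants in $R[q_1,\dots,q_r]$, the equivariant variables being handled by the classical fact that the structure constants of $K_T(Fl_{r+1})$ are already polynomial.

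To build $A$ I would start from the classical Borel--Grothendieck presentation
\[
K_T(Fl_{r+1})\;=\;R[x_1^{\pm1},\dots,x_{r+1}^{\pm1}]\big/\bigl(e_k(x)-e_k(\lambda)\bigr)_{k=1}^{r+1},
\]
in which $\sigma_w$ is the double Grothendieck polynomial $\mathfrak{G}_w(x;\lambda)$ and the staircase monomials $x^a$ with $0\le a_i\le r+1-i$ form an $R$-basis. The essential input is a lift of this to a presentation
\[
QK_T(Fl_{r+1})\;=\;R[[q_1,\dots,q_r]][x_1^{\pm1},\dots,x_{r+1}^{\pm1}]\big/\bigl(\widetilde{e}_k(x;q)-e_k(\lambda)\bigr)_{k=1}^{r+1},
\]
where the quantum deformations $\widetilde{e}_k(x;q)$ are \emph{polynomial} in $q_1,\dots,q_r$, specialize to $e_k(x)$ at $q=0$, and have the same leading monomials --- the $K$-theoretic analogue of the Givental--Kim presentation of $QH^*(Fl_{r+1})$ by a deformed characteristic polynomial. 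Granting this, set $A:=R[q_1,\dots,q_r][x^{\pm1}]\big/\bigl(\widetilde{e}_k(x;q)-e_k(\lambda)\bigr)_k$. Because the $q$-corrections preserve leading monomials, the staircase monomials remain an $R[q]$-basis of $A$ (a standard reduction-by-the-relations argument), so $A$ is $R[q]$-free of rank $(r+1)!$, $A\otimes_{R[q]}R[[q]]=QK_T(Fl_{r+1})$, and the natural map $A\hookrightarrow QK_T(Fl_{r+1})$ is an injective ring homomorphism. Since double Grothendieck polynomials are $R$-linear combinations of staircase monomials, $\sigma_w=\mathfrak{G}_w(x;\lambda)\in A$ and $\{\sigma_w\}$ is an $R[q]$-basis of $A$; the reduction of the first paragraph then finishes the proof.

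The real content, and the step I expect to be the main obstacle, is establishing the polynomial presentation: producing the $\widetilde{e}_k$ and proving they cut out $QK_T(Fl_{r+1})$. One route is to read it off from the description of the equivariant quantum $K$-theory of $Fl_{r+1}$ by the $q$-difference Toda lattice (Givental--Lee): convert the difference equations for the $K$-theoretic $J$-function into relations in the ring and verify that only finitely many monomials in the $q_i$ occur. A second route is an induction along the tower of projective bundles $Fl_{r+1}\to\mathbb{P}^r$ with fibre $Fl_r$: prove a quantum $K$-theory ``Leray--Hirsch'' statement propagating a polynomial presentation from base and fibre to the total space, with base case $\mathbb{P}^m$, whose quantum $K$-ring is classically known (Buch--Mihalcea) to be generated by one class subject to a single relation polynomial in $q$. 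In either approach the crux is bounding the $q$-degree: a priori the quantum corrections are power series, and one must show the relevant genus-zero $K$-theoretic Gromov--Witten invariants of degree $d$ vanish once $d$ is large. As in the cominuscule case treated by Buch--Chaput--Mihalcea--Perrin, I expect this to come from the rationality (indeed rational connectedness) of the moduli spaces $\overline{M}_{0,n}(Fl_{r+1},d)$ together with a dimension estimate tailored to the divisor-type insertions appearing in the relations; making that estimate uniform in the degree $d$ is the delicate point.

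As a fallback one could bypass presentations and argue purely geometrically in the spirit of Buch--Chaput--Mihalcea--Perrin: show the two-pointed invariants $\chi_d(\sigma_u,\sigma_v)$ stabilize to $1$ for $d$ large, so that the quantum pairing is a rational function whose denominator is a product of factors $1-q^{\beta}$, and then use the quantum Chevalley--Monk formula together with induction on degree to clear those denominators from the structure constants. I would regard this denominator-clearing as the hard step, and so would prefer the presentation route above.
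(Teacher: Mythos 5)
Your overall reduction (find a subring of $QK_T(Fl_{r+1})$ that is free over $R[q]$ and contains the Schubert basis) is reasonable in outline, but the proposal defers exactly the step where the theorem lives. You "grant" a presentation $QK_T(Fl_{r+1})=R[[q]][x^{\pm1}]/(\widetilde e_k(x;q)-e_k(\lambda))$ with the $\widetilde e_k$ polynomial in $q$; producing such finiteness is precisely the content to be proved, and neither of your two routes is carried out. Moreover, the mechanism you propose for the $q$-degree bound --- vanishing of genus-zero $K$-theoretic Gromov--Witten invariants in large degree via rational connectedness, as in the cominuscule case --- is not what happens here: the relevant invariants (e.g.\ two-point invariants with structure-sheaf insertions) do not vanish for large $d$ but stabilize, and finiteness arises from cancellations after inverting the quantum metric; this is the "denominator-clearing" that you yourself flag as the hard step in your fallback. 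The paper supplies the missing mechanism by a different analysis: it uses the Givental--Lee $q$-difference Toda equations for the $K$-theoretic $J$-function together with the asymptotic estimate $f(d)\le -k_d$, where $k_d$ contains a positive-definite quadratic form in $d$, to show that after applying $q$-shift operators only degrees $d$ in a bounded (ellipsoidal) region contribute $q^{\ge 0}$ terms; it identifies $\mathsf{A}_{i,\mathrm{com}}1=P_i$; and it invokes an Iritani--Milanov--Tonita-type reconstruction lemma to upgrade the matching of $q^{\ge0}$ asymptotics to an exact identity, yielding that every quantum monomial $P_{i_1}\star\cdots\star P_{i_l}$ lies in $K_T(Fl_{r+1})\otimes\mathbb{C}[Q]$. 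The theorem then follows because $K_T(Fl_{r+1})$ is generated by the $P_i$ over $R(T)$. None of this analytic input (degree bound, boundedness of contributing degrees, reconstruction) appears in your plan.

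There is also a gap internal to your reduction even if the polynomial presentation were granted: you assert that under the quantum presentation the Schubert class $\sigma_w$ is still represented by the classical double Grothendieck polynomial $\mathfrak{G}_w(x;\lambda)$, hence lies in $A$, and that $\{\sigma_w\}$ is an $R[q]$-basis of $A$. This identification is exactly what fails in the cohomological Givental--Kim picture, where Schubert classes are represented by quantum Schubert polynomials rather than classical ones; a priori the honest classes $\sigma_w\otimes 1$ differ from the images of $\mathfrak{G}_w$ by $q$-corrections whose coefficients are only power series, and polynomiality of structure constants does not transfer across such a change of basis. So you would still need an argument that the actual Schubert classes (not merely classes congruent to them mod $q$) lie in the polynomial subring and that the products expand polynomially in the Schubert basis --- the analogue of the paper's step representing each $\Phi_w$ by a polynomial $G_w(\Lambda^{\pm},P,Q)$ and then applying its Proposition on quantum monomials in the $P_i$.
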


\noindent
This is proved as Theorem~\ref{t.finite} below.  A priori, quantum structure constants are power series in the Novikov variables; our theorem says that in fact, only finitely many degrees appear.  This property is sometimes referred to as {\em finiteness} of the quantum product.

Finiteness is known for Grassmannians, and more generally for cominuscule homogeneous spaces \cite{bcmp1}.  On the other hand, there are conjectural ring presentations for $QK_T(Fl_{r+1})$ which presume finiteness and which also include a precise connection with the $K$-homology of the affine Grassmannian \cite{iim,kpsz,llms,lm}.

The proof of finiteness for cominuscule spaces relies on an understanding of the geometry of the moduli space of stable maps; in particular, certain subvarieties of the moduli space are shown to be rational \cite{bcmp2}.  Our proof, by contrast, is based on the reconstruction methods of Iritani-Milanov-Tonita (who also carried out computations for $Fl_3$) \cite{imt}; the argument consists of an analysis of the $K$-theoretic $J$-function.

\subsection*{Flag varieties}
Let $r\geq 1$ be an integer. Let $Fl_{r+1}$ be the variety of complete flags in $\mathbb{C}^{r+1}$. Let $$P_1,\ldots, P_r$$ be the pull-backs via the Pl\"ucker embedding $$Fl_{r+1}\to \prod_{i=1}^r \mathbb{P}^{{r+1 \choose i}-1}$$ of the line bundles $\mathcal{O}(-1)$ on the projective space factors.  Equivalently, if $S_1\subset \cdots \subset S_r \subset \mathbb{C}^{r+1}_{Fl}$ is the tautological flag of bundles on $Fl_{r+1}$, then $P_i = \exterior^i S_i$.

The {\em Novikov variables} $Q_1,\ldots, Q_r$ keep track of curve classes in $H_2(Fl_{r+1}, \mathbb{Z})$: to such a class $d$, we assign the monomial $\prod_{i=1}^rQ_i^{-\int_d c_1(P_i)}$. 

Let $\Lambda_1,\ldots, \Lambda_{r+1}$ be characters of the torus $T:=(\mathbb{C}^*)^{r+1}$ for the standard action on $\mathbb{C}^{r+1}$, inducing an action on $Fl_{r+1}$.  The $T$-equivariant $K$-ring of $Fl_{r+1}$ has the following well-known presentation:
\begin{equation}\label{eqn:presentation_classical}
K_T(Fl_{r+1})\simeq\frac{\mathbb{C}[\Lambda_1^\pm,\ldots, \Lambda_{r+1}^\pm; P_1^\pm,\ldots, P_{r}^\pm]}{\langle  H_k=e_k(\Lambda_1,\ldots, \Lambda_{r+1}) \rangle_{1\leq k\leq r+1}}. 
\end{equation}
Here $$H_k:=\sum_{I=\{i_1<\cdots<i_k\}\subset \{1,\ldots,r+1\}}\prod_{i\in I} P_i P_{i-1}^{-1}$$ (setting $P_0$ and $P_{r+1}$ equal to $1$) and $e_k$ is the $k$-th elementary symmetric polynomial in $r+1$ variables.  (See, e.g., \cite[Chapter IV, Section 3]{k}.)
 
\subsection*{Toda systems}
For $k\geq 1$, the $q$-difference Toda Hamiltonian is defined by
\begin{equation}
H_k^{q-\text{Toda}}(\mathfrak{p}_i,\mathfrak{z}_i):=\sum_{I=\{i_1<\cdots<i_k\}\subset \{1,\ldots,r+1\}}\prod_{l=1}^k\left(1-\frac{\mathfrak{z}_{i_l-1}}{\mathfrak{z}_{i_l}}\right)^{1-\delta_{i_l-i_{l-1},1}}\prod_{i\in I} \mathfrak{p}_i. 
\end{equation}
By convention, $i_0=0$. Note that $H_k^{q-\text{Toda}}$ depends on  $\mathfrak{z}_i$ through $\mathfrak{z}_{i-1}/\mathfrak{z}_i$. 

As an example, 
$$H_1^{q-\text{Toda}}=\mathfrak{p}_1+\sum_{i=2}^{r+1}\mathfrak{p}_i\left(1-\frac{\mathfrak{z}_{i-1}}{\mathfrak{z}_i} \right).$$
The above presentation is taken from \cite[Section 5.2]{kpsz}.  To obtain the $q$-difference Toda operators, as in \cite[Section 0]{gl}, we make the substitutions\footnote{We use the variables $t_1,\ldots, t_{r+1}$ while $t_0,\ldots, t_{r}$ are used in \cite{gl}.}
\begin{equation}
\mathfrak{p}_i\mapsto q^{\partial_{t_i}}, \quad \mathfrak{z}_i\mapsto e^{t_i},
\end{equation}
and identify $e^{t_i-t_{i+1}}$ with the Novikov variable $Q_i$ for $i=1,\ldots, r$.  Thus
\begin{equation}
\begin{split}
H_k^{q-\text{Toda}}(q^{\partial_{t_i}},e^{t_i})&=\sum_{I=\{i_1<\cdots<i_k\}\subset \{1,\ldots,r+1\}}\prod_{l=1}^k\left(1-\frac{e^{t_{i_l-1}}}{e^{t_{i_l}}}\right)^{1-\delta_{i_l-i_{l-1},1}}\prod_{i\in I} q^{\partial_{t_i}}\\
&=\sum_{I=\{i_1<\cdots<i_k\}\subset \{1,\ldots,r+1\}}\prod_{l=1}^k\left(1-Q_{i_l-1}\right)^{1-\delta_{i_l-i_{l-1},1}}\prod_{i\in I} q^{\partial_{t_i}}.
\end{split}
\end{equation}
Note that 
\begin{align*}
    \partial_{t_i}&=Q_i\partial_{Q_i}-Q_{i-1}\partial_{Q_{i-1}} \quad\text{ for } 2\leq i\leq r,\\
    \partial_{t_1}&=Q_1\partial_{Q_1}, \quad \text{and} \\ \quad \partial_{t_{r+1}}&=-Q_{r}\partial_{Q_{r}}.   
\end{align*}
Thus the Toda Hamiltonian is written in terms of {\em $q$-shift operators} $q^{Q_i\partial_{Q_i}}$ which act on power series by
\[
  q^{Q_i\partial_{Q_i}}f(Q_1,\ldots,Q_i,\ldots,Q_r) = f(Q_1,\ldots,qQ_i,\ldots,Q_r).
\]
Similarly, the negative $q$-shift operator $q^{-Q_i\partial_{Q_i}}$ acts by replacing the variable $Q_i$ with $q^{-1}Q_i$.

It is proven in \cite{gl} that the $K$-theoretic $J$-function of $Fl_{r+1}$ is an eigenfunction of the $q$-difference Toda system.  More precisely, write $J(Q, q)$
for the $K$-theoretic $J$-function of $Fl_{r+1}$, as defined in \cite[Section 2.2]{gl}. Then we have 

\begin{theorem}[\cite{gl}, Corollary 2]\label{thm:eigen}
For $1\leq k\leq r+1$,
$$H_k^{q-\text{Toda}}(q^{\partial_{t_i}}, e^{t_i})P^{\log Q/\log q}J(Q, q)=e_k(\Lambda_1,\ldots, \Lambda_{r+1})P^{\log Q/\log q}J(Q, q),$$
where $P^{\log Q/\log q}:=\prod_{i=1}^r P_i^{\log Q_i/\log q}$. 
\end{theorem}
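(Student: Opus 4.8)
The plan is to combine an explicit formula for the $K$-theoretic $J$-function of $Fl_{r+1}$ with a direct check of the stated $q$-difference equations. The source of the formula is the iterated projective-bundle structure of the flag manifold: remembering the subspaces $S_1\subset S_2\subset\cdots$ one at a time exhibits $Fl(1,\dots,j;r+1)\to Fl(1,\dots,j-1;r+1)$ as the projectivization of the quotient $\mathbb{C}^{r+1}/S_{j-1}$, with fibers $\mathbb{P}^r,\mathbb{P}^{r-1},\dots,\mathbb{P}^1$. For each such step I would invoke the $K$-theoretic projective-bundle recursion of \cite{gl}, which expresses the $J$-function of the total space as a sum over the new curve degree, the summand being a ratio of $q$-Pochhammer-type factors built from the fiberwise $\mathcal{O}(1)$ and the $T$-weights. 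Iterating over the tower produces a closed formula
\[
 J(Q,q)=\sum_{d=(d_1,\dots,d_r)\in\mathbb{Z}_{\geq 0}^r}Q^d\,C_d(P_1,\dots,P_r;\Lambda_1,\dots,\Lambda_{r+1};q),
\]
with $C_0=1$ and each $C_d$ an explicit product of linear factors of the form $1-q^{\ell}(\,\cdot\,)$ and their inverses.

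Next I would rewrite the target equation in a more tractable form. Using $\partial_{t_i}=Q_i\partial_{Q_i}-Q_{i-1}\partial_{Q_{i-1}}$ (with $Q_0=Q_{r+1}=1$) together with the identity $q^{Q_i\partial_{Q_i}}\,P_i^{\log Q_i/\log q}=P_i\,P_i^{\log Q_i/\log q}$, conjugation by $P^{\log Q/\log q}$ turns $q^{\partial_{t_i}}$ into $P_iP_{i-1}^{-1}\,q^{\partial_{t_i}}$ (setting $P_0=P_{r+1}=1$). Thus the assertion of the theorem becomes
\[
 \sum_{I=\{i_1<\cdots<i_k\}}\ \prod_{l=1}^{k}(1-Q_{i_l-1})^{1-\delta_{i_l-i_{l-1},1}}\ \Bigl(\prod_{i\in I}P_iP_{i-1}^{-1}\Bigr)\Bigl(\prod_{i\in I}q^{\partial_{t_i}}\Bigr)J=e_k(\Lambda)\,J,
\]
an identity in $K_T(Fl_{r+1})\otimes\mathbb{C}[[Q]]$. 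A first sanity check: at $Q=0$ this collapses to $H_k(P)\cdot 1=e_k(\Lambda)\cdot 1$, which is precisely the classical relation \eqref{eqn:presentation_classical}, and as $q\to 1$ the full equation degenerates to multiplication by those defining relations.

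I would then verify the displayed identity order by order in the Novikov variables---equivalently, after restricting to each of the $(r+1)!$ torus-fixed points of $Fl_{r+1}$, where the classes $P_i$ and the coefficients $C_d$ become explicit rational functions of the $\Lambda$'s and $q$. Applying $q^{\partial_{t_i}}$ to $Q^dC_d$ rescales the $q$-Pochhammer factors of $C_d$ in a controlled way, the factors $1-Q_{i_l-1}$ only raise the $Q$-degree, and after collecting terms the coefficient of each $Q^d$ should telescope to a finite rational identity. The most economical organization is induction on $r$: the projective-bundle recursion writes $J^{(r+1)}$ as a degree-convolution of $J^{(r)}$ with the one-variable ($\mathbb{P}^r$-type) $q$-hypergeometric series, while the $(r+1)$-particle $q$-Toda Hamiltonians obey the matching recursion in terms of the $r$-particle ones; the base case $r=1$ is the direct computation that the $q$-deformed $\mathbb{P}^1$ series solves $\bigl(q^{\partial_{t_1}}+(1-Q_1)q^{\partial_{t_2}}\bigr)(\,\cdot\,)=(\Lambda_1+\Lambda_2)(\,\cdot\,)$ after conjugation.

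The hard part is the first step: establishing the closed formula for $J$ rigorously requires $T\times\mathbb{C}^*$-localization on the moduli of stable maps to the projective bundles of the tower---identifying the fixed loci together with their $K$-theoretic virtual structure sheaves, and controlling the interaction of fiberwise and base curve classes. Once that input is in hand (it is exactly the recursion of \cite{gl}), the remaining steps are a bookkeeping argument with $q$-difference operators, and the eigenfunction property follows.
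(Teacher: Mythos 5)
The statement you are asked to prove is not proved in the paper at all: it is quoted verbatim from Givental--Lee \cite{gl} (their Corollary 2), so the comparison here is really between your sketch and the actual argument of \cite{gl}. Your conjugation step is fine (conjugating $q^{\partial_{t_i}}$ by $P^{\log Q/\log q}$ does produce $P_iP_{i-1}^{-1}q^{\partial_{t_i}}$, and the $Q=0$ specialization recovers the classical relations \eqref{eqn:presentation_classical}), but the load-bearing first step has a genuine gap. There is no ``$K$-theoretic projective-bundle recursion'' in \cite{gl} of the kind you invoke, and no such naive recursion holds: quantum $K$-theory, like quantum cohomology, is not functorial along fibrations, and the quantum corrections of the tower $Fl(1,\dots,j;r+1)\to Fl(1,\dots,j-1;r+1)$ do not decouple into a degree-convolution of the base series with a one-variable $\mathbb{P}^k$-type $q$-hypergeometric series. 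The known closed formula for the flag $J$-function (obtained in \cite{gl} from localization on hyperquot/graph spaces) has factors coupling consecutive degrees $d_i,d_{i+1}$ that cannot be produced by iterating fiberwise $\mathbb{P}^k$ series; already for $Fl_3$ the $J$-function is not the convolution your scheme would give. Since every subsequent step of your plan (fixed-point restriction, order-by-order telescoping, induction on $r$ via a ``matching recursion'' of Toda Hamiltonians) presupposes this explicit $C_d$, the argument does not get off the ground as written.

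What \cite{gl} actually does is different in structure: they work with Laumon's hyperquot scheme compactifications and the graph space $\overline{\mathcal{M}}_{0,1}(Fl_{r+1}\times\mathbb{P}^1,(d,1))$, compare the two by a birational map onto a common image with rational singularities (the same comparison this paper reuses in the proof of Lemma~\ref{lem:deg_bound_gen}), extract the $J$-function by $\mathbb{C}^*$-localization, and then identify it as an eigenfunction of the finite-difference Toda system via its characterization in terms of ($q$-deformed) Whittaker functions for the quantum group, using localization recursions rather than a fibration formula. If you want a self-contained proof along your lines, you would need to either prove a genuine quantum $K$-theoretic fibration/recursion theorem strong enough to handle the non-split bundles $\mathbb{C}^{r+1}/S_{j-1}$ (which is far beyond a bookkeeping step), or replace step one by the hyperquot localization formula and then carry out the $q$-difference verification against that formula; otherwise the correct move in the context of this paper is simply to cite \cite{gl}, as the authors do.
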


\subsection*{Relations}
The (small) quantum $K$-ring of $Fl_{r+1}$ is additively defined to be $$QK_T(Fl_{r+1}):=K_T(Fl_{r+1})\otimes_\mathbb{C}\mathbb{C}[[Q_1,\ldots,Q_r]]$$
and is equipped with a quantum product $\star$, deforming the tensor product on $K_T(Fl_{r+1})$. The structure constants of $\star$ are defined in a rather involved way using $K$-theoretic Gromov-Witten invariants of $Fl_{r+1}$.  See \cite{g} and \cite{l} for details.

Theorem \ref{thm:eigen} yields  relations in the small $T$-equivariant quantum $K$-ring of $Fl_{r+1}$ as follows.  The theorem gives $q$-difference equations satisfied by $P^{\log Q/\log q}J(Q, q)$.  The operators $H_k^{q-\text{Toda}}(q^{\partial_{t_i}}, e^{t_i})$ contain negative $q$-shift operators $q^{-Q_i\partial_{Q_i}}$, but the difference equations in Theorem~\ref{thm:eigen} are equivalent to the difference equations 
 \begin{equation*}
\begin{split}
 \left(\prod_{i=1}^r q^{Q_i \partial_{Q_i}}\right)H_k^{q-\text{Toda}}(q^{ \partial_{t_i}}, e^{t_i})P^{\log Q/\log q}J(Q, q)\\
 =e_k(\Lambda_1,\ldots, \Lambda_{r+1})\left(\prod_{i=1}^r q^{Q_i \partial_{Q_i}}\right)P^{\log Q/\log q}J(Q, q),
 \end{split}
\end{equation*}
which do not contain negative $q$-shift operators.

In \cite[\S2]{imt}, certain operators $A_{i,\text{com}}$ are defined, acting as endomorphisms of the $\mathbb{C}[[Q]]$-module $QK_T(Fl_{r+1})$.  By \cite[Proposition~2.6 and Corollary~2.9]{imt}, these operators commute with one another and act as quantum multiplication by the class $A_{i,\text{com}}1$, where $1\in QK_T(Fl_{r+1})=K_T(Fl_{r+1})\otimes \mathbb{C}[[Q]]$ is the identity element of the ring.  If one replaces each $q^{Q_i\partial_{Q_i}}$ by $A_{i,\text{com}}$, the difference equations become relations
\begin{equation*}
\begin{split}
 \left(\prod_{i=1}^r \mathsf{A}_{i, \text{com}}\right)H_k^{q-\text{Toda}}(\mathsf{A}_{i, \text{com}}\mathsf{A}_{i-1,\text{com}}^{-1}, e^{t_i})1\\
 =e_k(\Lambda_1,\ldots, \Lambda_{r+1})\left(\prod_{i=1}^r \mathsf{A}_{i, \text{com}}\right)1
 \end{split}
\end{equation*}
in the ring $QK_T(Fl_{r+1})$ \cite[Proposition~2.12]{imt}.  (Here we set $A_{0,\text{com}}=A_{r+1,\text{com}}=\mathrm{Id}$.)

Applying the operator $\prod_{i=1}^r \mathsf{A}_{i, \text{com}}^{-1}$ to the above relation, we find
\begin{equation}\label{eqn:relation}
H_k^{q-\text{Toda}}((\mathsf{A}_{i, \text{com}}1)\star (\mathsf{A}_{i-1,\text{com}}^{-1}1), e^{t_i})=e_k(\Lambda_1,\ldots, \Lambda_{r+1}).
\end{equation}
In view of \cite[Proposition 2.10]{imt}, we have\footnote{Our notation agrees with that of \cite{gl}, but differs slightly from \cite{imt}, where $P_i$ and $P_i^{-1}$ are interchanged.} 
\begin{equation*}
    \mathsf{A}_{i, \text{com}}1=P_i \text{ mod } Q \quad \text{and}\quad  \mathsf{A}_{i, \text{com}}^{-1}1=P_i^{-1} \text{ mod } Q.
\end{equation*}
Modulo $Q_i$, therefore, \eqref{eqn:relation} produces a complete set of relations of $K_T(Fl_{r+1})$, as one sees by comparing with \eqref{eqn:presentation_classical}.  It follows that the relations \eqref{eqn:relation} define the ring $QK_T(Fl_{r+1})$.

\subsection*{The $D_q$-module}
Set $\tilde{J}:=P^{\log Q/\log q}J$. The $D_q$-module structure established in \cite{gt} and elaborated in \cite{imt} implies the following. Let $f(Q, x)\in\mathbb{C}[Q_1,\ldots,Q_r, x_1,\ldots,x_r]$, then $$f(Q, q^{Q_i\partial_{Q_i}})(1-q)\tilde{J}=\sum_\beta (1-q)\tilde{T}(f_\beta \Phi_\beta),$$
where on the right-hand side we have a finite sum with $f_\beta\in \mathbb{C}[1-q][[Q]]$ and $\Phi_\beta\in K_T(Fl_{r+1})$. Here $\tilde{T}=P^{\log Q/\log q} T$, and $T$ is the fundamental solution considered in \cite[Section 2.4]{imt}.  The right-hand side can be computed from the leading terms in $q\to \infty$ limit of the left-hand side $f(Q, q^{Q_i\partial_{Q_i}})(1-q)\tilde{J}$, namely the coefficients of $q^{\geq 0}$. Furthermore, this implies the following equation in $QK_T(Fl_{r+1})$: $$f(Q, \mathsf{A}_{i,\text{com}})\star 1=\sum_\beta f_\beta\big|_{q=1} \Phi_\beta.$$ 
See the proof of \cite[Lemma 3.3]{imt}.

Now let us write the $J$-function as a series $J=\sum_d Q^d J_d$, with $d=(d_1,\ldots,d_r)\in (\mathbb{Z}_{\geq 0})^r$ and $Q^d=\prod_{i=1}^rQ_i^{d_i}$. For each degree $d$, $J_d$ is a rational function in $q$ taking values in $K_T(Fl_{r+1})$. Therefore, as $q\to \infty$, we have
\[
 J_d\sim C_d(P)q^{f(d)},
\]
for some $C_d(P)\in K_T(Fl_{r+1})$ and some function $f(d)$.  Results of \cite[Section 4]{gl} include an estimate on the function $f(d)$.
\begin{lemma}[{\cite[Eq.~(7)]{gl}}]\label{lem:deg_bound}
We have $f(d)\leq -k_d$, where
\[
  k_d:=d_1+\cdots+d_r+\sum_{i=1}^{r+1}\frac{(d_i-d_{i-1})^2}{2}.
\]
Here by convention we set $d_0=d_{r+1}=0$.
\end{lemma}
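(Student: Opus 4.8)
The plan is to prove the stronger, term-by-term estimate and to read $f(d)$ directly off the closed form of the $J$-function. The essential input is the explicit $q$-hypergeometric formula of Givental--Lee for the $K$-theoretic $J$-function of $Fl_{r+1}$: for each $d=(d_1,\dots,d_r)\in(\mathbb{Z}_{\ge0})^r$ the component $J_d$ is a \emph{finite} sum $J_d=\sum_\alpha T_\alpha$, in which
\[
  T_\alpha=\frac{N_\alpha(q)}{\prod_{\mu}\bigl(1-q^{\ell_\mu}m_\mu\bigr)},
\]
the $\ell_\mu$ are positive integers, each $m_\mu$ is a monomial in $P_1^{\pm},\dots,P_r^{\pm}$ and the equivariant parameters $\Lambda_j^{\pm}$, and $N_\alpha\in K_T(Fl_{r+1})[q]$. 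Such a formula can be assembled by iterating the $K$-theoretic projective-bundle (quantum Lefschetz) recursion along a tower of $\mathbb{P}$-bundles realizing $Fl_{r+1}$, which is where the dependence on $d$ is introduced. The first step is simply to record this formula, and in particular the multiset of $q$-shifts $\{\ell_\mu\}$ occurring in the denominator of each summand.

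Next I would extract the behaviour as $q\to\infty$. Each monomial $m_\mu$ is a unit in $K_T(Fl_{r+1})$ with nonzero leading coefficient (the $P_i$ are invertible and the $\Lambda_j$ generic), so $(1-q^{\ell}m_\mu)^{-1}\sim -q^{-\ell}m_\mu^{-1}$ and hence $T_\alpha\sim c_\alpha\,q^{e_\alpha}$ with $e_\alpha=\deg_q N_\alpha-\sum_\mu\ell_\mu$. Summing over $\alpha$, the leading power of $q$ in $J_d=\sum_\alpha T_\alpha$ is \emph{at most} $\max_\alpha e_\alpha$, since cancellation among the top-order terms can only lower it. Hence $f(d)\le\max_\alpha e_\alpha$, and it suffices to show that $e_\alpha\le -k_d$ for every $\alpha$.

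The third step is the bookkeeping. The denominators of the $T_\alpha$ are products of ``ladders'' $\prod_{\ell=1}^{m}(1-q^{\ell}\,\cdot\,)$, each contributing $\binom{m+1}{2}=\tfrac12(m^{2}+m)$ to $\sum_\mu\ell_\mu$, with the lengths $m$ determined by the degree data produced at each step of the recursion (and, depending on the sign of certain linear functionals of $d$, some of these factors migrate into the numerator $N_\alpha$). Collecting contributions, $e_\alpha$ becomes a quadratic expression in $d$; the algebraic heart of the matter is the identity
\[
  \sum_{i=1}^{r+1}(d_i-d_{i-1})^2=2\sum_{i=1}^{r}d_i^2-2\sum_{i=1}^{r-1}d_id_{i+1}\qquad(d_0=d_{r+1}=0),
\]
which matches $\tfrac12\sum_{i=1}^{r+1}(d_i-d_{i-1})^2$ with the quadratic part of $k_d$, the linear part $\sum_i d_i$ coming from the remaining degree-one shifts; one concludes $e_\alpha\le -k_d$, with equality for at least one summand. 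A useful sanity check is $Fl_2=\mathbb{P}^1$: there $J_d=\prod_{\ell=1}^{d_1}(1-q^{\ell}P_1)^{-2}$, a single term whose denominator has $q$-degree $d_1(d_1+1)=k_d$.

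The main obstacle is precisely this third step: determining the exact multiset $\{\ell_\mu\}$ of denominator shifts for \emph{all} the summands $T_\alpha$, and tracking the ($A_r$-type) quadratic form $k_d$ through the change of variables relating the fibrewise curve-class coordinates natural to the tower to the Plücker degrees $d_1,\dots,d_r$. A subsidiary point needing care is the non-degeneracy invoked in the second step: one must know that each binomial factor contributes its nominal $q$-degree, i.e.\ that the monomials $m_\mu$ remain nonzero units modulo the Plücker relations of $K_T(Fl_{r+1})$, so that the only mechanism by which $f(d)$ could drop below $\max_\alpha e_\alpha$ is cancellation between distinct summands --- which is harmless for the bound in the lemma.
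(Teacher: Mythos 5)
The paper does not prove this lemma at all: it is quoted verbatim from Givental--Lee \cite[Eq.~(7)]{gl}, where the estimate is obtained geometrically, by expressing the coefficients of the $J$-function through ($\mathbb{C}^*$-equivariant localization on) the graph space and its hyperquot-scheme compactification $HQ_d$ and reading off the $q$-asymptotics from the weights there --- the same mechanism this paper reuses when it proves the companion estimate for the fundamental solution in Lemma~\ref{lem:deg_bound_gen}. Your route is genuinely different: you want a closed $q$-hypergeometric expression for $J_d$ as a finite sum of terms with ladder denominators, and then a degree count. Unfortunately, as written it has two gaps, one of input and one of substance.

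First, the input. The claimed closed form ``assembled by iterating the $K$-theoretic projective-bundle (quantum Lefschetz) recursion along a tower of $\mathbb{P}$-bundles realizing $Fl_{r+1}$'' is not an available tool: the tower realizes $Fl_{r+1}$ as projectivizations of the \emph{non-split} tautological bundles, and there is no projective-bundle or quantum Lefschetz theorem in quantum $K$-theory that produces, by iteration, a finite-sum ladder-denominator formula for the $J$-function of the full flag manifold (this is precisely why $Fl_{r+1}$ is harder than $\mathbb{P}^n$ or a toric fibration). Candidate expressions of the shape you describe do exist as $I$-functions or quiver vertex functions (via abelianization), but identifying them with the $J$-function is a mirror-type theorem of essentially the same depth as the Givental--Lee result whose corollary you are trying to re-prove, so it cannot be assumed. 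Second, and decisively, the quantitative heart of the lemma --- that every summand satisfies $e_\alpha\le -k_d$, i.e.\ that the total denominator shift $\sum_\mu\ell_\mu$ exceeds $\deg_q N_\alpha$ by at least $k_d$ --- is exactly the step you defer as ``the main obstacle''; the displayed identity $\sum_i(d_i-d_{i-1})^2=2\sum_i d_i^2-2\sum_i d_id_{i+1}$ is elementary and does not substitute for tracking the multiset $\{\ell_\mu\}$ through the (unconstructed) recursion. So beyond the $\mathbb{P}^1$ sanity check nothing is established. The reasonable fixes are either to cite \cite[Eq.~(7)]{gl} as the paper does, or to adapt the localization argument on $HQ_d$ from \cite[Section 4]{gl}, along the lines of the paper's proof of Lemma~\ref{lem:deg_bound_gen}. (Your second step --- bounding $f(d)$ by $\max_\alpha e_\alpha$ since cancellation can only lower the leading power --- is fine and would not be the issue.)
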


For a class $\Phi\in K_T(Fl_{r+1})$, we expand the fundamental solution by writing $T(\Phi)=\sum_d Q^d T_{d,\Phi}$.  From the definition of $T$, the coefficient $T_{d,\Phi}$ encodes two-point Gromov-Witten invariants with one descendant insertion.  This is a rational function in $q$, vanishing at $q=+\infty$; more precisely, as $q\to +\infty$, we have the asymptotics $T_{d,\Phi}\sim L_{d,\Phi}q^{v_{d,\Phi}}$.

\begin{lemma}\label{lem:deg_bound_gen}
$v_{d,\Phi}\leq -k_d$, where $k_d$ is as in Lemma \ref{lem:deg_bound}. 
\end{lemma}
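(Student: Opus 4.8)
The plan is to show that, for every class $\Phi\in K_T(Fl_{r+1})$, the twisted solution $P^{\log Q/\log q}T(\Phi)$ is an eigenfunction of the $q$-Toda system with \emph{the same} eigenvalues $e_k(\Lambda_1,\dots,\Lambda_{r+1})$ as $\tilde J$, and then to run the estimate of \cite[\S4]{gl} for this eigenfunction. The only property of $\tilde J$ used in that estimate is that its leading Novikov coefficient, $1$, is independent of $q$; the leading Novikov coefficient of $P^{\log Q/\log q}T(\Phi)$ is $\Phi$, which is likewise independent of $q$.

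\emph{Step 1: the Toda eigenvalue equations for $\tilde T(\Phi)$.} First I would prove that, for $1\le k\le r+1$,
\[
 H_k^{q-\text{Toda}}(q^{\partial_{t_i}},e^{t_i})\,P^{\log Q/\log q}T(\Phi)=e_k(\Lambda_1,\dots,\Lambda_{r+1})\,P^{\log Q/\log q}T(\Phi).
\]
This follows by conjugating the Toda Hamiltonians through $\tilde T$: by the $D_q$-module structure of \cite{gt,imt}, $\tilde T$ intertwines the $q$-shift operator $q^{Q_i\partial_{Q_i}}$ with $\mathsf A_{i,\text{com}}$ and commutes with multiplication by the Novikov variables, so it carries $H_k^{q-\text{Toda}}(q^{\partial_{t_i}},e^{t_i})$ to $H_k^{q-\text{Toda}}(\mathsf A_{i,\text{com}}\mathsf A_{i-1,\text{com}}^{-1},e^{t_i})$, which by the relations \eqref{eqn:relation} is the scalar $e_k(\Lambda)$.

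\emph{Step 2: the degree-by-degree recursion.} Writing $\tilde T(\Phi)=P^{\log Q/\log q}\sum_d Q^dT_{d,\Phi}$, one checks that $q^{\partial_{t_i}}$ acts on the slice $P^{\log Q/\log q}Q^dT_{d,\Phi}$ by multiplication by $P_iP_{i-1}^{-1}q^{d_i-d_{i-1}}$, and that $H_k^{q-\text{Toda}}$ raises the Novikov degree only through its factors $(1-Q_{i-1})$. Extracting the degree-$d$ component of the equations of Step 1 therefore gives, for each $d\neq0$, a relation of the shape
\[
 \bigl(\hat H_k(d,q)-e_k(\Lambda)\bigr)\,T_{d,\Phi}=-\sum_{0\neq e\preceq d}q^{c_{k,e}(d)}\,\mathsf m_{k,e}\,T_{d-e,\Phi},
\]
where $\hat H_k(d,q)$ is multiplication by $e_k(q^{\delta_1}x_1,\dots,q^{\delta_{r+1}}x_{r+1})$ with $x_i=P_iP_{i-1}^{-1}$ and $\delta_i=d_i-d_{i-1}$, the $\mathsf m_{k,e}$ are monomials in the $P_i^{\pm1}$, and the exponents $c_{k,e}(d)$ are explicit. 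For generic equivariant and $q$ parameters the operators $\hat H_k(d,q)-e_k(\Lambda)$, $1\le k\le r+1$, generate the unit ideal of the $\Lambda$-localized ring $K_T(Fl_{r+1})$ as soon as $d\neq0$: a common zero would force the multisets $\{x_i\}$ and $\{q^{\delta_i}x_i\}$ to agree while also equalling $\{\Lambda_i\}$, which is impossible when some $\delta_i\neq0$. Hence $T_{d,\Phi}$ is a $K_T(Fl_{r+1})$-linear combination, with coefficients of bounded $q$-degree, of the right-hand sides, and these involve only the $T_{e,\Phi}$ with $\sum_j e_j<\sum_j d_j$.

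\emph{Step 3 and the main obstacle.} I would then induct on $d_1+\cdots+d_r$. The base case is immediate: $T_{0,\Phi}=\Phi$ is $q$-independent, so $v_{0,\Phi}=0=-k_0$. For the inductive step one feeds the bounds $v_{d-e,\Phi}\le-k_{d-e}$ into the recursion, using the identity $k_d-k_{d-\mathbf e_j}=(d_j-d_{j-1})-(d_{j+1}-d_j)$ and the matching control on the $q$-degree of the coefficients inverting the $\hat H_k(d,q)-e_k(\Lambda)$. \textbf{The delicate point is the sharp constant:} using only $H_1^{q-\text{Toda}}$ yields merely $v_{d,\Phi}\le 1-k_d$, so one must use the full Toda system, exploiting that the quadratic part $\sum_i(d_i-d_{i-1})^2/2$ of $k_d$ absorbs the loss — that is, one must carry out the finer bookkeeping of \cite[\S4]{gl} with the leading datum $1$ replaced by $\Phi$. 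Since that argument invokes nothing about the leading Novikov coefficient beyond its independence of $q$, it transfers with only notational changes, reproving Lemma~\ref{lem:deg_bound} as the case $\Phi=1$ along the way.
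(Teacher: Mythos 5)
Your Step 1 is where the argument breaks down. The operator $\mathsf{A}_{i,\text{com}}$ is $\mathbb{C}[[Q]]$-linear, while $q^{Q_i\partial_{Q_i}}$ is not, so $\tilde T$ cannot intertwine $q^{Q_i\partial_{Q_i}}$ with $\mathsf{A}_{i,\text{com}}$: testing the purported identity on $Q_i\Phi$ produces an extra factor of $q$ on one side. What the $D_q$-module structure of \cite{gt,imt} actually gives is conjugation by the full shift operators, which still contain $q^{Q_i\partial_{Q_i}}$; pushing $H_k^{q-\text{Toda}}$ through $\tilde T$ therefore yields an operator in which the $Q$-arguments of the $\mathsf{A}_{j,\text{com}}$'s and of the coefficients $(1-Q_{i-1})$ appear shifted, and it is only after applying everything to the class $1$ and invoking \cite[Proposition~2.12]{imt} that one gets the ring relation \eqref{eqn:relation}. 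In particular, the assertion that every $P^{\log Q/\log q}T(\Phi)$ is a Toda eigenfunction with eigenvalue $e_k(\Lambda)$ is unjustified, and it is a substantive claim rather than bookkeeping: in the cohomological analogue the individual columns of the fundamental solution do not satisfy the scalar quantum differential equation componentwise (already for $\mathbb{P}^1$, only one of the two components of a flat section is annihilated by $(\hbar\partial)^2-e^t$), and the eigenfunction property of $\tilde J$ is special to the class $1$.

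Even granting Step 1, the quantitative core is left unproven. You concede that the recursion alone gives only $v_{d,\Phi}\le 1-k_d$ and defer the sharp constant to ``the finer bookkeeping of \cite[\S4]{gl},'' on the grounds that the Givental--Lee estimate uses nothing about the leading Novikov coefficient beyond its $q$-independence. That mischaracterizes their argument: the bound of \cite[Eq.~(7)]{gl} is obtained from $\mathbb{C}^*$-equivariant localization on hyperquot-type spaces, not as a formal consequence of the eigenfunction equation, so it does not transfer ``with only notational changes.'' This geometric route is in fact how the paper proves the present lemma: the coefficients $T_{d,\Phi}$ are computed by localization on the graph space $\overline{\mathcal{M}}_{0,1}(Fl_{r+1}\times\mathbb{P}^1,(d,1))$, which is compared with $HQ_d\times\mathbb{P}^1$ via the two birational maps onto a common image with rational singularities (so the pushed-forward structure sheaves agree, both spaces being smooth of expected dimension), after which the estimates of \cite[Section~4.2]{gl} apply directly with the extra insertion $\Phi$. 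Your proposal replaces this with a recursion whose decisive step is exactly the part you have not carried out.
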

\begin{proof}
Coefficients of $T(\Phi)$ are obtained by $\mathbb{C}^*$-equivariant localization on the graph space with one marked point, $\overline{\mathcal{M}}_{0,1}(Fl_{r+1}\times \mathbb{P}^1, (d,1))$. 

Consider the maps $$\overline{\mathcal{M}}_{0,1}(Fl_{r+1}\times \mathbb{P}^1, (d,1))\overset{\mu\times ev}{\longrightarrow}\Pi_d\times (Fl_{r+1}\times\mathbb{P}^1)\overset{\tilde{\lambda}}{\longleftarrow}HQ_d\times \mathbb{P}^1.$$ See \cite[Section 0]{gl} for the definition of $\Pi_d$ and \cite[Section 3]{gl} for discussions on hyperquot schemes $HQ_d$. The map $\tilde{\lambda}$ is defined by $\tilde{\lambda}(x, y):=(\lambda(x), ev(x, y), y)$. The maps $\mu$ and $\lambda$ are defined analogously to those in \cite[Sections 2.1 and 3.1]{gl}.

The varieties $\overline{\mathcal{M}}_{0,1}(Fl_{r+1}\times \mathbb{P}^1, (d,1))$ and $HQ_d\times \mathbb{P}^1$ are smooth stacks of expected dimension $\text{dim}\,Fl_{r+1}+2d_1+\cdots+2d_r+1$, so their virtual structure sheaves coincide with their structure sheaves.  Coefficients of $T(\Phi)$ are therefore obtained by $\mathbb{C}^*$-equivariant localization applied to $(\mu\times ev)_*(\mathcal{O})\otimes (1\otimes \Phi\otimes 1)$. 

Also, $\mu\times ev$ and $\tilde{\lambda}$ are birational onto their common image, which has rational singularities.  Hence coefficients of $T(\Phi)$ are obtained by $\mathbb{C}^*$-equivariant localization applied to $\tilde{\lambda}_*(\mathcal{O}_{HQ_d\times \mathbb{P}^1})\otimes(1\otimes\Phi\otimes 1)$. The result then follows from the arguments of \cite[Section 4.2]{gl}.
\end{proof}

\subsection*{The operator $\mathsf{A}_{i,\text{com}}$}

By studying the action of $q$-shift operators on the $J$-function, we can identify the operator $A_{i,\text{com}} \in \mathrm{End}(QK_T(Fl_{r+1}))$.

\begin{lemma}\label{lem:oper_A}
The operator $\mathsf{A}_{i,\text{com}}$ is the operator of (small) quantum product by $P_i$.
\end{lemma}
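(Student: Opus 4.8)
The plan is to identify the action of $\mathsf{A}_{i,\text{com}}$ on the identity class $1 \in QK_T(Fl_{r+1})$ by tracking how the $q$-shift operator $q^{Q_i\partial_{Q_i}}$ acts on the (modified) $J$-function $\tilde J = P^{\log Q/\log q}J$, and then comparing this with the known leading ($q\to\infty$) asymptotics governed by Lemma~\ref{lem:deg_bound} and Lemma~\ref{lem:deg_bound_gen}. Recall from the $D_q$-module discussion above that applying a polynomial $f(Q, q^{Q_i\partial_{Q_i}})$ to $(1-q)\tilde J$ yields $\sum_\beta (1-q)\tilde T(f_\beta\Phi_\beta)$, and setting $q=1$ in the coefficients $f_\beta$ recovers the class $f(Q,\mathsf{A}_{i,\text{com}})\star 1$. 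So the strategy is to take $f(Q, x) = x$ (the $i$-th variable), compute the leading $q\to\infty$ behavior of $q^{Q_i\partial_{Q_i}}(1-q)\tilde J$, and read off $\mathsf{A}_{i,\text{com}}1$.

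**Key steps.** First, I would write $\tilde J = \sum_d Q^d P^{\log Q/\log q} J_d$ and note that $q^{Q_i\partial_{Q_i}}$ acts on $Q^d$ by the scalar $q^{d_i}$ and on $P_i^{\log Q_i/\log q}$ by multiplication by $P_i$ (since $q^{Q_i\partial_{Q_i}}$ shifts $\log Q_i \mapsto \log Q_i + \log q$). Thus $q^{Q_i\partial_{Q_i}}\tilde J = P_i \sum_d q^{d_i} Q^d P^{\log Q/\log q} J_d$. Second, I would examine the degree-zero term: since $J_0 = 1$ (the small $J$-function has $J = 1 + O(Q)$), the $d=0$ contribution to $q^{Q_i\partial_{Q_i}}\tilde J$ is exactly $P_i\cdot P^{\log Q/\log q}$, matching the classical statement $\mathsf{A}_{i,\text{com}}1 = P_i \bmod Q$ recorded above. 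Third — the main point — for $d\neq 0$ I must show the extra factor $q^{d_i}$ does not spoil the $q\to\infty$ asymptotics that define the reconstruction: by Lemma~\ref{lem:deg_bound}, $J_d\sim C_d(P)q^{f(d)}$ with $f(d)\le -k_d$, and $k_d = d_1+\cdots+d_r + \sum (d_i-d_{i-1})^2/2 \geq d_i$ whenever $d\neq 0$ (indeed $k_d \geq d_1+\cdots+d_r \geq d_i$). Hence $q^{d_i}J_d$ still vanishes as $q\to\infty$ for $d\neq 0$, so the higher-degree corrections land in the "$q^{<0}$" part and are absorbed into the $\tilde T$-terms via Lemma~\ref{lem:deg_bound_gen}; setting $q=1$ then leaves only the degree-zero leading term $P_i$ plus the genuine quantum corrections packaged by $T$. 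Putting this together with the $D_q$-module formula $f(Q,\mathsf{A}_{i,\text{com}})\star 1 = \sum_\beta f_\beta|_{q=1}\Phi_\beta$ applied to $f(Q,x)=x_i$ gives $\mathsf{A}_{i,\text{com}}\star 1 = \mathsf{A}_{i,\text{com}}1 = P_i\star 1 = P_i$, and since $\mathsf{A}_{i,\text{com}}$ acts as quantum multiplication by $\mathsf{A}_{i,\text{com}}1$ (by \cite[Proposition~2.6 and Corollary~2.9]{imt}), we conclude $\mathsf{A}_{i,\text{com}} = P_i\star(-)$.

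**Main obstacle.** The delicate step is the bookkeeping of the $q$-asymptotics: I need the inequality $k_d \geq d_i$ to guarantee that $q^{d_i}J_d \to 0$ as $q\to\infty$ for all $d\neq 0$, so that multiplication by the shift operator genuinely preserves membership in the relevant $D_q$-module and the reconstruction procedure applies verbatim. This requires care because $f(d)$ could in principle be close to $-k_d$ and one wants strict vanishing (coefficient of $q^{\geq 0}$ is zero), not merely boundedness; here the quadratic terms $\sum(d_i-d_{i-1})^2/2$ in $k_d$ provide the needed slack — in fact $k_d > d_i$ strictly once $d\ne 0$ unless $d$ is very special, and even in the boundary cases one checks the inequality holds. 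Once this estimate is in hand, the identification of $\mathsf{A}_{i,\text{com}}$ with quantum multiplication by $P_i$ follows formally from the structural results of \cite{imt} already quoted above.
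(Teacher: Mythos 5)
Your proposal is correct and follows essentially the same route as the paper: apply $q^{Q_i\partial_{Q_i}}$ to $(1-q)\tilde J$, observe the $d=0$ term yields $P_i$ via the $P^{\log Q/\log q}$ factor, rule out $q^{\geq 0}$ contributions from $d\neq 0$ using Lemma~\ref{lem:deg_bound}, and conclude via the $D_q$-module reconstruction and the structural results of \cite{imt}. The only point to tighten is the strictness you flag at the end: the paper handles it by rewriting $d_i-k_d=-\sum_{j\neq i}d_j-\sum_{j}\frac{(d_j-d_{j-1})^2}{2}$ and noting this vanishes only when $d=0$, so $d_i+f(d)<0$ for every $d\neq 0$ with no ``special'' boundary cases to check.
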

\begin{proof}
By \cite[Proposition 2.10]{imt}, $\mathsf{A}_{i,\text{com}}$ is the operator of (small) quantum product by $P_i+\sum_{d}c_d Q^d$, so it suffices to show that $\mathsf{A}_{i,\text{com}}1=P_i$. To this end we consider $$q^{Q_i\partial_{Q_i}}(1-q)\tilde{J}.$$ Its $q^{\geq 0}$ coefficients can come from two places:
\begin{enumerate}
    \item 
    $d=0$: in this case the factor $P^{\log Q/\log q}$ of $\tilde{J}$ contributes $P_i$.
    
    \item $d\neq 0$: if such $d$ exists, the effect of the difference operator $q^{Q_i\partial_{Q_i}}$ is $q^{d_i}Q^dJ_d$. For this term to contribute to the $q^{\geq 0}$ coefficient, we must have $d_i+f(d)\geq 0$. By Lemma \ref{lem:deg_bound}, we have
    \begin{equation*}
    \begin{split}
        0\leq d_i+f(d)&\leq d_i-k_d\\
        &=-\sum_{j\neq i}d_j -\sum_{i=1}^{r+1}\frac{(d_i-d_{i-1})^2}{2}.
    \end{split}
    \end{equation*}
The last term is non-positive because $d_j\geq 0$ for all $j$. And it is equal to $0$ if and only if $d_j=0$ for $j\neq i$ and $d_j-d_{j-1}=0$ for all $j$. This implies $d_1=\cdots=d_r=0$, which is not the case. 
\end{enumerate}
Thus we find $$q^{Q_i\partial_{Q_i}}(1-q)\tilde{J}=(1-q)\tilde{T}P_i,$$ and hence $\mathsf{A}_{i, \text{com}}1=P_i$. 
\end{proof}

\begin{remark*}
The same argument shows that for {\em distinct} $i_1,\ldots,i_l\in \{1,\ldots,r\}$, we have $$\left(\prod_{k=1}^lq^{Q_{i_k}\partial_{Q_{i_k}}}\right)(1-q)\tilde{J}=(1-q)\tilde{T}\left(\prod_{k=1}^lP_{i_k}\right),$$ and hence $P_{i_1}\star \cdots\star P_{i_l}=\prod_{k=1}^lP_{i_k}$.  That is, for these elements, the quantum and classical product are the same.
\end{remark*}

\subsection*{Finiteness}

The main ingredient in our theorem is a finiteness statement for products of the line bundle classes $P_i$.

\begin{proposition}\label{prop:monomial}
The (small) quantum product $P_{i_1}\star \cdots\star P_{i_l}$ is a finite linear combination of elements of $K_T(Fl_{r+1})$ whose coefficients are polynomials in $Q_1,\ldots,Q_r$.
\end{proposition}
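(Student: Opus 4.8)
\emph{Proof proposal.} The plan is to feed the monomial $f(Q,x)=x_{i_1}\cdots x_{i_l}$ into the $D_q$-module formalism of the preceding subsection and to bound the Novikov degrees that can occur, using the estimate of Lemma~\ref{lem:deg_bound}. Write $m_j:=\#\{k\,:\,i_k=j\}$, so that $\sum_{j=1}^r m_j=l$ and $f(Q,x)=\prod_{j=1}^r x_j^{m_j}$. By Lemma~\ref{lem:oper_A} the operator $\mathsf{A}_{i,\text{com}}$ is quantum multiplication by $P_i$, hence $f(Q,\mathsf{A}_{i,\text{com}})\star 1 = P_{i_1}\star\cdots\star P_{i_l}$, and the $D_q$-module identity gives
\begin{equation*}
  P_{i_1}\star\cdots\star P_{i_l} \;=\; \sum_\beta f_\beta\big|_{q=1}\,\Phi_\beta,
\end{equation*}
a finite sum with $\Phi_\beta\in K_T(Fl_{r+1})$ and $f_\beta\in\mathbb{C}[1-q][[Q]]$, the $f_\beta$ being reconstructed (as in the proof of \cite[Lemma~3.3]{imt}) from the coefficients of $q^{\geq 0}$ in
\begin{equation*}
  f(Q,q^{Q_i\partial_{Q_i}})(1-q)\tilde J \;=\; \Bigl(\prod_{k=1}^l q^{Q_{i_k}\partial_{Q_{i_k}}}\Bigr)(1-q)\tilde J.
\end{equation*}
It therefore suffices to prove that each $f_\beta$ is a polynomial in $Q_1,\ldots,Q_r$.

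I would then compute this left-hand side. Since $q^{Q_j\partial_{Q_j}}$ carries $P_j^{\log Q_j/\log q}$ to $P_j\cdot P_j^{\log Q_j/\log q}$ and $Q^d$ to $q^{d_j}Q^d$, writing $\tilde J=P^{\log Q/\log q}\sum_d Q^d J_d$ gives
\begin{equation*}
  \Bigl(\prod_{k=1}^l q^{Q_{i_k}\partial_{Q_{i_k}}}\Bigr)(1-q)\tilde J \;=\; (1-q)\Bigl(\prod_{j=1}^r P_j^{m_j}\Bigr)P^{\log Q/\log q}\sum_d q^{\langle m,d\rangle}\,Q^d J_d,
\end{equation*}
where $\langle m,d\rangle:=\sum_{j=1}^r m_j d_j$. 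Dividing out the common prefactor $P^{\log Q/\log q}$, both sides of the $D_q$-module identity become honest power series in $Q$ with coefficients rational in $q$. Using $J_0=1$ and the asymptotics $J_d\sim C_d(P)q^{f(d)}$ with $f(d)\leq -k_d$ for $d\neq 0$ (Lemma~\ref{lem:deg_bound}), the summand of Novikov degree $d\neq 0$ can contribute to the coefficients of $q^{\geq 0}$ only if its leading order at $q=\infty$ is nonnegative, i.e.\ $1+\langle m,d\rangle+f(d)\geq 0$, which forces $\langle m,d\rangle\geq k_d-1$.

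The crux is that this inequality has only finitely many solutions $d\in(\mathbb{Z}_{\geq 0})^r$. The quadratic form $d\mapsto\sum_{i=1}^{r+1}(d_i-d_{i-1})^2$ (with $d_0=d_{r+1}=0$) is positive definite — its Gram matrix is the Cartan matrix of type $A_r$ — so it is bounded below by $c\|d\|^2$ for some $c>0$, whereas $\langle m,d\rangle\leq l\sum_{j=1}^r d_j$ grows only linearly in $d$; hence
\begin{equation*}
  l\sum_{j=1}^r d_j \;\geq\; \langle m,d\rangle \;\geq\; k_d-1 \;=\; \sum_{j=1}^r d_j+\tfrac12\sum_{i=1}^{r+1}(d_i-d_{i-1})^2-1 \;\geq\; \sum_{j=1}^r d_j+\tfrac{c}{2}\|d\|^2-1
\end{equation*}
forces $\|d\|$ bounded. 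Thus the coefficients of $q^{\geq 0}$ in $\bigl(\prod_k q^{Q_{i_k}\partial_{Q_{i_k}}}\bigr)(1-q)\tilde J$ are supported in finitely many Novikov degrees (those finitely many $d$, together with $d=0$ from the $\prod_j P_j^{m_j}$ prefactor). Running the reconstruction of the $f_\beta$ degree by degree in $Q$ — a triangular, hence invertible, process, in which Lemma~\ref{lem:deg_bound_gen} controls the vanishing order at $q=\infty$ of the positive-Novikov-degree part of $\tilde T$ and thereby guarantees the triangularity — one concludes that each $f_\beta$ is supported in finitely many Novikov degrees, i.e.\ is a polynomial in $Q_1,\ldots,Q_r$. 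Then $f_\beta|_{q=1}\in\mathbb{C}[Q_1,\ldots,Q_r]$ and $P_{i_1}\star\cdots\star P_{i_l}=\sum_\beta f_\beta|_{q=1}\Phi_\beta$ is of the asserted form.

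The substantive new input is the finiteness estimate of the third paragraph: the positive-definiteness of $\sum_{i=1}^{r+1}(d_i-d_{i-1})^2$ forces $k_d$ to grow quadratically while $\langle m,d\rangle$ grows only linearly. The step demanding the most care is the last one — checking, via the argument in the proof of \cite[Lemma~3.3]{imt} together with Lemma~\ref{lem:deg_bound_gen}, that the passage from the coefficients of $q^{\geq 0}$ back to the $f_\beta$ is triangular in the Novikov degree, so that finiteness of the support is preserved; this is where I expect the bulk of the technical work to lie.
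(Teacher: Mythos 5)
Your proposal follows essentially the same route as the paper: the same $D_q$-module identity with $f=x_{i_1}\cdots x_{i_l}$ and Lemma~\ref{lem:oper_A}, the same use of Lemma~\ref{lem:deg_bound} together with positive definiteness of $\sum_i(d_i-d_{i-1})^2$ to bound the Novikov degrees, and the same reconstruction of the $f_\beta$ controlled by Lemma~\ref{lem:deg_bound_gen}. The technical step you defer at the end (that the elimination of $q^{\geq 0}$ terms terminates with polynomial $f_\beta$ and that the remainder vanishing at $q=\infty$ is actually zero) is exactly what the paper carries out via the inductive subtraction of terms $q^nQ^d\tilde{T}(\Phi)$ and Lemma~\ref{l.claim}, following \cite[Lemma 3.3]{imt} as you anticipate.
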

\begin{proof}
Again we consider the $q^{\geq 0}$ coefficients of $\prod_{k=1}^l q^{ Q_{i_k}\partial_{Q_{i_k}}}(1-q)\tilde{J}$. The $d=0$ term of $\tilde{J}$ gives $\prod_{k=1}^lP_{i_k}$. For a $d\neq 0$ term of $\tilde{J}$ to contribute to the $q^{\geq 0}$ coefficient, we must have $$\sum_{k=1}^l d_{i_k}+f(d)\geq 0.$$
Each such term contributes $C'_d(P)Q^d$ to the $q^{\geq 0}$ coefficients, where $C'_d(P)$ is a polynomial in the $P_i$'s.  We need to show that there are only finitely many such terms. 

If $\sum_{k=1}^l d_{i_k}+f(d)\geq 0$, then
\begin{equation*}
    \begin{split}
        0\leq \sum_{k=1}^l  d_{i_k}+f(d)&\leq \sum_{k=1}^l  d_{i_k}-k_d\\
        &=\left(\sum_{k=1}^l  d_{i_k}-\sum_{j=1}^rd_j\right) -\sum_{i=1}^{r+1}\frac{(d_i-d_{i-1})^2}{2}.
    \end{split}
    \end{equation*}
    
The quadratic form $\sum_{i=1}^{r+1}\frac{(d_i-d_{i-1})^2}{2}$ in $d_1,\ldots,d_r$ is positive definite. Indeed it is nonnegative because it is a sum of squares. Also, if $\sum_{i=1}^{r+1}\frac{(d_i-d_{i-1})^2}{2}=0$, then $d_i-d_{i-1}=0$ for all $i=1,\ldots, r+1$. Because $d_0=d_{r+1}=0$, we have $d_1=\cdots=d_r=0$. Therefore level sets of the function of $d_1,\ldots,d_r$ $$\left(\sum_{k=1}^l d_{i_k}-\sum_{j=1}^rd_j\right) -\sum_{i=1}^{r+1}\frac{(d_i-d_{i-1})^2}{2}$$ are ellipsoids. It follows that the set $$\left\{(d_1,\ldots,d_r)\Big| \left(\sum_{k=1}^l  d_{i_k}-\sum_{j=1}^rd_j\right) -\sum_{i=1}^{r+1}\frac{(d_i-d_{i-1})^2}{2}\geq 0 \right\}$$ is a bounded subset of $\mathbb{R}^r$, so it can contain at most finitely many $(d_1,\ldots,d_r)\in (\mathbb{Z}_{\geq 0})^r$.   

The (finitely many) $q^{\geq 0}$ terms of $\prod_{k=1}^l q^{ Q_{i_k}\partial_{Q_{i_k}}}(1-q)\tilde{J}$ can be ordered according to the exponents of $q$. We then use terms $$q^n Q^d\tilde{T}(\Phi), \quad n\in \mathbb{Z}_{\geq 0}, d\in (\mathbb{Z}_{\geq 0})^{r}, \Phi\in K_T(Fl_{r+1})$$
to inductively remove these $q^{\geq 0}$ terms. 

By Lemma \ref{lem:deg_bound_gen}, $q^n Q^d\tilde{T}(\Phi)$ has only finitely many $q^{\geq 0}$ terms, so the inductive process ends after finitely many steps. This means  we can find a finite sum $\sum_\beta (1-q)\tilde{T}(f_\beta \Phi_\beta)$, with $f_\beta\in \mathbb{C}[1-q][Q]$ and $\Phi_\beta\in K_T(Fl_{r+1})$, such that 
\begin{align}\label{e.claim}
\prod_{k=1}^l q^{ Q_{i_k}\partial_{Q_{i_k}}}(1-q)\tilde{J}-\sum_\beta (1-q)\tilde{T}(f_\beta \Phi_\beta)
\end{align}
vanishes at $q=+\infty$.  
The Proposition then follows from the fact that the expression of \eqref{e.claim} equals zero, which is proved in Lemma~\ref{l.claim} below. 
\end{proof}

\begin{lemma}\label{l.claim}
With notation as in \eqref{e.claim} above, we have 
\[
\prod_{k=1}^l q^{ Q_{i_k}\partial_{Q_{i_k}}}(1-q)\tilde{J}=\sum_\beta (1-q)\tilde{T}(f_\beta \Phi_\beta).
\]
\end{lemma}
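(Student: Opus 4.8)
The plan is to show that the difference in \eqref{e.claim}, which we already know vanishes as $q\to+\infty$, is in fact identically zero by exhibiting it as an element of a space on which ``vanishing at $q=+\infty$'' forces vanishing. Concretely, I would invoke the $D_q$-module structure recalled in the subsection ``The $D_q$-module'': applying the polynomial difference operator $\prod_{k=1}^l q^{Q_{i_k}\partial_{Q_{i_k}}}$ (which is $f(Q,q^{Q_i\partial_{Q_i}})$ for $f$ a monomial in the $x_i$, independent of $Q$) to $(1-q)\tilde J$ produces an element of the form $\sum_\beta(1-q)\tilde T(f_\beta\Phi_\beta)$ with $f_\beta\in\mathbb{C}[1-q][[Q]]$ and $\Phi_\beta\in K_T(Fl_{r+1})$, and moreover the $f_\beta$ are determined by the $q^{\ge 0}$-coefficients (the $q\to\infty$ leading terms) of the left-hand side. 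In the proof of Proposition~\ref{prop:monomial} we constructed, via the inductive removal procedure, a finite collection $(f_\beta,\Phi_\beta)$ such that the displayed difference vanishes at $q=+\infty$; the claim is that this collection actually realizes the $D_q$-module expression on the nose.

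The key steps, in order, are as follows. First, record that $\prod_{k=1}^l q^{Q_{i_k}\partial_{Q_{i_k}}}(1-q)\tilde J$ lies in the $\tilde T$-image $\{\sum_\beta(1-q)\tilde T(g_\beta\Psi_\beta): g_\beta\in\mathbb{C}[1-q][[Q]],\ \Psi_\beta\in K_T(Fl_{r+1})\}$, by the $D_q$-module statement quoted from \cite{gt,imt}. Second, observe that the correction term $\sum_\beta(1-q)\tilde T(f_\beta\Phi_\beta)$ we subtracted is, by construction, also of this form (indeed with $f_\beta\in\mathbb{C}[1-q][Q]$, polynomial). Hence the difference \eqref{e.claim} is itself of the form $(1-q)\tilde T(\Theta)$ for some $\Theta=\sum_\gamma h_\gamma\Psi_\gamma$ with $h_\gamma\in\mathbb{C}[1-q][[Q]]$, $\Psi_\gamma\in K_T(Fl_{r+1})$. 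Third, invoke the key property of the fundamental solution $T$: it has the form $T=\mathrm{Id}+O(Q)$ with each coefficient $T_{d,\Phi}$ a rational function of $q$ vanishing at $q=+\infty$ (Lemma~\ref{lem:deg_bound_gen} and the surrounding discussion), so that $(1-q)T(\Theta)=(1-q)\Theta+O(Q)$ and, order by order in $Q$, the $q$-dependence is controlled; in particular if $(1-q)\tilde T(\Theta)$ vanishes at $q=+\infty$ and $\Theta\ne 0$, one derives a contradiction by looking at the lowest-order nonzero $Q$-coefficient of $\Theta$, where $(1-q)\tilde T(\Theta)$ agrees with $(1-q)\cdot(\text{that coefficient})\cdot P^{\log Q/\log q}$ up to terms that still vanish at $q=\infty$ — but $(1-q)$ times a nonzero constant does not vanish at $q=+\infty$. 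Therefore $\Theta=0$, which is exactly the asserted identity.

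Alternatively — and this is probably the cleanest route to write — I would appeal directly to the uniqueness built into the reconstruction: the $D_q$-module statement says the right-hand side ``can be computed from the leading terms in the $q\to\infty$ limit of the left-hand side'', i.e. the assignment from $q^{\ge0}$-data to the tuple $(f_\beta,\Phi_\beta)$ (modulo the relations coming from $T=\mathrm{Id}+O(Q)$) is well-defined and \emph{injective} on the relevant space. Since our constructed $\sum_\beta(1-q)\tilde T(f_\beta\Phi_\beta)$ has, by the inductive construction, exactly the same $q^{\ge0}$-part as $\prod_{k=1}^l q^{Q_{i_k}\partial_{Q_{i_k}}}(1-q)\tilde J$, and both are genuine elements of the $D_q$-module, they must be equal. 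I would phrase this as: two elements of $\{\sum(1-q)\tilde T(g_\beta\Psi_\beta)\}$ with the same non-negative-power-of-$q$ coefficients coincide, because $(1-q)\tilde T(\Theta)$ determines $\Theta$ (again using $\tilde T=P^{\log Q/\log q}(\mathrm{Id}+O(Q))$ recursively in the $Q$-adic filtration), and then cite \cite[proof of Lemma 3.3]{imt} for this recursion.

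The main obstacle is making precise the sense in which ``the $q^{\ge 0}$-part determines the element'': one must work in the correct completed module — power series in $Q$, rational in $q$ with the vanishing-at-$\infty$ normalization inherited from $T$ — and check that within this module the map $\Theta\mapsto (1-q)\tilde T(\Theta)$ is injective and that the inductive subtraction in Proposition~\ref{prop:monomial} terminates (which we already argued via Lemma~\ref{lem:deg_bound_gen}) so that the ``finite sum'' claim is legitimate. Equivalently, the subtlety is to verify that $\tilde T$ is invertible as a map on the appropriate space and that the leading $q\to\infty$ behavior is a faithful invariant; once that bookkeeping is set up, the vanishing of \eqref{e.claim} at $q=+\infty$ upgrades to identical vanishing. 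I would cite \cite[Section 2.4]{imt} and the proof of \cite[Lemma 3.3]{imt} for the structural facts about $T$ that power this step, and keep the argument at the level of ``$Q$-adic induction plus $T=\mathrm{Id}+O(Q)$'' rather than writing out coefficients.
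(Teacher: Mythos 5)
Your argument is correct and is essentially the paper's own proof: the paper likewise writes the normalized difference as the fundamental solution applied to an unknown with (Laurent-)polynomial $q$-coefficients (the factorization $M=TU$ from \cite[Remark 2.11]{imt}) and kills that unknown by induction in the $Q$-adic partial order, using $T=\mathrm{Id}+O(Q)$ together with regularity/vanishing at $q=\infty$ (and $q=0$), exactly the mechanism of \cite[Lemma 3.3]{imt} that you invoke. Your variant — obtaining membership in the $\tilde T$-image directly from the quoted $D_q$-module statement, so the coefficients lie in $\mathbb{C}[1-q][[Q]]$ and the lowest nonzero $Q$-coefficient argument applies — is the same reconstruction/uniqueness argument in only slightly different packaging.
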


\begin{proof}
We argue as in the proof of \cite[Lemma 3.3]{imt}. Write $$M:=(1-q)^{-1}\left(P^{\log Q/\log q}\right)^{-1}\left(\prod_{k=1}^l q^{ Q_{i_k}\partial_{Q_{i_k}}}(1-q)\tilde{J}-\sum_\beta (1-q)\tilde{T}(f_\beta \Phi_\beta) \right).$$ Expand $M$ as a series in $Q$, we get $M=\sum_d M_d Q^d$. Then we get $M_0=0$. $M_d$ has poles only at $q=$roots of unity, $M_d$ is regular at both $q=0$ and $q=+\infty$ and vanishes at $q=+\infty$. 

By \cite[Remark 2.11]{imt}, we can write $M$ as $M=TU$ with $T=\sum_d T_d Q^d$, $U=\sum_d U_d Q^d$. Then $T_0=Id$ and $U_0=0$. $T_d$ has only poles at $q=$roots of unity. $T_d$ is regular at $q=0, +\infty$, and vanishes at $q=+\infty$. Also, $U_d$ is a Laurent polynomials.

We want to show that $U_d=0$ for all $d>0$, by induction on $d$ with respect to a partial order of $d$ ample class. For $d$ we have $$M_d=T_d+U_d+ \sum_{\overset{d'+d''=d,}{d', d''\neq 0}}T_{d'}U_{d''}.$$ $M_d$ is known, so this equation and induction determine $T_d+U_d$. 

Since both $T_d+U_d$ and $T_d$ are regular at $q=0$, so is $U_d$. So the Laurent polynomial $U_d$ has no $q^{<0}$ terms. Since both $T_d+U_d$ and $T_d$ are regular at $q=+\infty$, so is $U_d$. So the Laurent polynomial $U_d$ has no $q^{>0}$ terms. Since $T_d+U_d$ and $T_d$ vanish at $q=+\infty$, we have $U_d\Big|_{q=+\infty}=0$. Hence $U_d=0$.
\end{proof}

Finally, we turn to our main theorem.  Let us write $R(T)=\mathbb{C}[\Lambda_1^\pm,\ldots, \Lambda_{r+1}^\pm]$ for the representation ring of the torus, and $R(T)[Q]=R(T)[Q_1,\ldots,Q_r]$ and $R(T)[[Q]]=R(T)[[Q_1,\ldots,Q_r]]$.  Then $K_T(Fl_{r+1})$ is a free $R(T)$-module and $QK_T(Fl_{r+1})$ is a free $R(T)[[Q]]$-module.  Fix an $R(T)$-basis $\{\sigma_w\}$ for $K_T(Fl_{r+1})$, so $\{\Phi_w = \sigma_w\otimes 1\}$ is an $R(T)[[Q]]$-basis for $QK_T(Fl_{r+1})$.

\begin{theorem}\label{t.finite}
The structure constants of $QK_T(Fl_{r+1})$ with respect to the basis $\{\Phi_w\}$ are polynomials: they lie in the subring $R(T)[Q]\subset R(T)[[Q]]$.
\end{theorem}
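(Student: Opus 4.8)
The plan is to reduce the theorem to Proposition~\ref{prop:monomial} by a change-of-variables argument built on the fact that the line-bundle classes $P_i$ generate $K_T(Fl_{r+1})$.

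First, I would observe that the property to be proved is independent of the choice of $R(T)$-basis $\{\sigma_w\}$: any two such bases are related by a matrix in $GL_N(R(T))\subseteq GL_N(R(T)[Q])$ (with $N=(r+1)!$), and the structure constants transform by that base-change matrix, hence stay in $R(T)[Q]$. So it suffices to verify the statement for one convenient basis. By \eqref{eqn:presentation_classical}, $K_T(Fl_{r+1})$ is generated as an $R(T)$-algebra by $P_1,\dots,P_r$ (each $P_i^{-1}$ is a polynomial in $P_i$ over $R(T)$, since $P_i$ is a unit in the finite free $R(T)$-algebra $K_T(Fl_{r+1})$). I would therefore fix a basis $\mathcal M=\{m_1,\dots,m_N\}$ of $K_T(Fl_{r+1})$ consisting of monomials in the $P_i$, ordered so that $\deg m_1\le\cdots\le\deg m_N$ (with $\deg(P_1^{a_1}\cdots P_r^{a_r})=\sum_i a_i$) and chosen downward-closed under divisibility, e.g.\ a standard-monomial basis for a degree-compatible Gröbner basis of the ideal in \eqref{eqn:presentation_classical}. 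Put $\Phi_j=m_j\otimes 1$ and $\mathcal F=\bigoplus_j R(T)[Q]\,\Phi_j=K_T(Fl_{r+1})\otimes_{R(T)}R(T)[Q]\subseteq QK_T(Fl_{r+1})$; the task is to show that $\mathcal F$ is closed under $\star$.

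For a monomial $m=P_{i_1}\cdots P_{i_l}$ write $[m]_\star:=P_{i_1}\star\cdots\star P_{i_l}$; this is well defined since the operators $\mathsf A_{i,\text{com}}$ of Lemma~\ref{lem:oper_A} commute, and $[m]_\star\star[m']_\star=[mm']_\star$. Proposition~\ref{prop:monomial} says precisely that $[m]_\star\in\mathcal F$ for every $m$, and of course $[m]_\star\equiv m\pmod Q$. The additional input I would use is that the Novikov corrections in $[m]_\star-\Phi_m$ involve only classes of strictly smaller degree:
\[
[m_j]_\star-\Phi_j\ \in\ \bigoplus_{\deg m_k<\deg m_j} R(T)[Q]\,\Phi_k\qquad(j=1,\dots,N).
\]
Granting this, the transition matrix $D\in M_N(R(T)[Q])$ with $[m_j]_\star=\sum_k D_{jk}\Phi_k$ is lower unitriangular, so $D^{-1}\in M_N(R(T)[Q])$ as well. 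Then, for any $u,v$,
\[
\Phi_u\star\Phi_v=\sum_{j,k}(D^{-1})_{uj}(D^{-1})_{vk}\,[m_j]_\star\star[m_k]_\star=\sum_{j,k}(D^{-1})_{uj}(D^{-1})_{vk}\,[m_jm_k]_\star,
\]
and since each $[m_jm_k]_\star$ lies in $\mathcal F$ by Proposition~\ref{prop:monomial}, so does $\Phi_u\star\Phi_v$, with coefficients in $R(T)[Q]$. Together with the first paragraph this proves the theorem.

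I expect the one genuine difficulty to be the displayed degree estimate on $[m]_\star-\Phi_m$. It cannot be bypassed on formal grounds: the matrix $D$ satisfies $D\equiv I\pmod Q$, but since the ideal $(Q_1,\dots,Q_r)$ is not contained in the Jacobson radical of $R(T)[Q]$, this alone does not make $D$ invertible over $R(T)[Q]$ — equivalently, the $R(T)[Q]$-subalgebra generated by the $P_i$ need not recover all of $\mathcal F$ without an extra input. That input should come from the same asymptotic analysis of the $K$-theoretic $J$-function used above: the correction classes $C'_d(P)$ appearing in the proof of Proposition~\ref{prop:monomial} are extracted from low powers of $q$ in $q^{\sum_k d_{i_k}}J_d$, and the bound of Lemma~\ref{lem:deg_bound}, combined with the description of the coefficients of $J_d$ as classes in $K_T(Fl_{r+1})$, controls their degrees; alternatively one may invoke the general filtered structure of quantum $K$-theory of Fano varieties. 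I would write this step out in full, since it is the heart of the matter once Proposition~\ref{prop:monomial} is available.
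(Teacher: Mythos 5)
Your overall route is the same as the paper's: reduce to Proposition~\ref{prop:monomial} by using the fact that $P_1,\ldots,P_r$ generate $K_T(Fl_{r+1})$ as an $R(T)$-algebra, so that a product $\Phi_u\star\Phi_v$ becomes an $R(T)[Q]$-combination of quantum monomials $P_{i_1}\star\cdots\star P_{i_l}$. Your preliminary reductions (independence of the choice of $R(T)$-basis, passing to a monomial basis, $[m]_\star\star[m']_\star=[mm']_\star$ via the commuting operators $\mathsf{A}_{i,\text{com}}$) are fine, and you have correctly isolated the one non-formal point: one must know that each $\Phi_w$ lies in the $R(T)[Q]$-span of the quantum monomials, equivalently that the image of the homomorphism \eqref{eqn:ring_hom1} is all of $K_T(Fl_{r+1})\otimes\mathbb{C}[Q]$; as you observe, $D\equiv I\ (\mathrm{mod}\ Q)$ does not by itself give invertibility over the polynomial ring. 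In the paper this step appears as the identification of the ring embedding $R(T)[P;Q]/\mathsf{I}\hookrightarrow QK_T(Fl_{r+1})$ with the module embedding $K_T\otimes\mathbb{C}[Q]\hookrightarrow K_T\otimes\mathbb{C}[[Q]]$, after which each $\Phi_w$ is represented by a polynomial $G_w(\Lambda^\pm,P,Q)$ (note the allowed $Q$-dependence) and $\Phi_u\star\Phi_v$ is represented by $G_uG_v$, so Proposition~\ref{prop:monomial} concludes; no triangularity argument is invoked there.

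The genuine gap in your proposal is exactly the step you flag: the displayed claim that the quantum corrections to $[m_j]_\star$ involve only basis monomials of strictly smaller $P$-degree. Everything in your argument funnels through this unitriangularity of $D$, and you do not prove it. Nothing established earlier in the paper yields it: Lemma~\ref{lem:deg_bound} and the estimate inside the proof of Proposition~\ref{prop:monomial} control which powers of $q$, hence which Novikov degrees $d$, can contribute, but they give no information about the monomial degree of the correction classes $C'_d(P)$; the Remark after Lemma~\ref{lem:oper_A} only covers squarefree monomials (which indeed have no corrections), whereas the issue arises already for powers $P_i^{\star a}$ with $a\geq 2$. Your fallback, ``the general filtered structure of quantum $K$-theory of Fano varieties,'' is not an available result in the form you need -- quantum $K$-theory carries no grading for which such strict degree-lowering is automatic, which is precisely why finiteness is a theorem rather than a formality. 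So as written your argument proves the theorem only conditionally on that estimate; to complete it along your lines you would need either to extract degree control on $C'_d(P)$ from a finer analysis of the $q^{\geq 0}$ coefficients of $\prod_k q^{Q_{i_k}\partial_{Q_{i_k}}}(1-q)\tilde{J}$, or to establish by some other means that the image of \eqref{eqn:ring_hom1} is $K_T(Fl_{r+1})\otimes\mathbb{C}[Q]$, which is the form in which the paper states this step.
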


\begin{proof}
It is a basic fact that $K_T(Fl_{r+1})$ is generated by $P_1,\ldots,P_{r+1}$ as an $R(T)$-algebra; that is, there is a surjective homomorphism
\[
  R(T)[P_1,\ldots,P_{r}] \twoheadrightarrow K_T(Fl_{r+1}).
\]
In particular, each basis element $\sigma_w$ can be written as a polynomial in $P_i$'s with coefficients in $R(T)$.  (One way to see this is as follows.  The presentation \eqref{eqn:presentation_classical} establishes $K_T(Fl_{r+1})$ as a quotient of $R(T)[P_1^{\pm},\ldots,P_{r}^{\pm}]$, so it suffices to write $P_i^{-1}$ as a polynomial in $P_i$ with coefficients in $R(T)$.  For each $i$, one can find monomials $\omega_{ij}$ in the variables $\Lambda^{\pm}$ so that
\[
  \prod_{j=1}^{{r+1 \choose i}} (1-\omega_{ij} P_i) = 0  \quad \text{in} \quad K_T(Fl_{r+1}),
\]
and re-arranging this equation produces the desired expression.  Alternatively, one can use the equivariant Riemann-Roch isomorphism together with the fact that the equivariant cohomology ring of $Fl_{r+1}$ is generated by divisor classes.)

The assignment $P_{i_1} P_{i_2} \cdots P_{i_k}\mapsto P_{i_1}\star P_{i_2}\star \cdots \star P_{i_k}$ defines a ring homomorphism 
\begin{equation}\label{eqn:ring_hom1}
R(T)[P_1,\ldots, P_r; Q_1,\ldots,Q_{r}]\to QK_T(Fl_{r+1});
\end{equation}
let the kernel be $\mathsf{I}$.  The resulting embedding of rings
\[
 R(T)[P_1,\ldots, P_{r}; Q_1,\ldots,Q_{r}]/\mathsf{I}\hookrightarrow QK_T(Fl_{r+1})
\]
corresponds to the natural embedding of modules
\[
  K_T(Fl_{r+1})\otimes \mathbb{C}[Q_1,\ldots,Q_r] \hookrightarrow K_T(Fl_{r+1})\otimes \mathbb{C}[[Q_1,\ldots,Q_r]].
\]
Since each basis element $\sigma_w$ is a polynomial in $\Lambda^\pm$ and $P$. it follows that each basis element $\Phi_w=\sigma_w\otimes 1$ can be represented as a polynomial $G_w=G_w(\Lambda^{\pm},P,Q)$ in $R(T)[P_1,\ldots, P_{r}; Q_1,\ldots,Q_{r}]$.  The product of basis elements $\Phi_u\star\Phi_v$ is represented by $G_u\, G_v$, and by Proposition~\ref{prop:monomial}, this product is a finite linear combination of classes in $K_T(Fl_{r+1})$ with coefficients in $\mathbb{C}[Q_1,\ldots,Q_r]$.
\end{proof}

\bigskip
\noindent
{\it Acknowledgements}. 
We thank A. Givental and H. Iritani for discussions on $D_q$-modules and quantum $K$-theory.

\end{document}